\theoremstyle{plain}
\newtheorem{thm}{Theorem}[section]
\newtheorem{prop}[thm]{Proposition}
\newtheorem{lem}[thm]{Lemma}
\newtheorem{cor}[thm]{Corollary}
\theoremstyle{definition}
\newtheorem{dfn}[thm]{Definition}
\newtheorem{exmps}[thm]{Examples}
\newtheorem{rem}[thm]{Remark}
\newtheorem{dfns-rems}[thm]{Definitions and Remarks}
\newtheorem{notas-rems}[thm]{Notations and Remarks}
\newtheorem{exmps-rems}[thm]{Examples and Remarks}
\begin{document}

% ------------------------------------------------------------------------

\title[On the Stanley depth of squarefree monomial ideals]{On the Stanley depth of squarefree monomial ideals}

% ------------------------------------------------------------------------

\author[S. A. Seyed Fakhari]{S. A. Seyed Fakhari}

\address{S. A. Seyed Fakhari, School of Mathematics, Institute for Research
in Fundamental Sciences (IPM), P.O. Box 19395-5746, Tehran, Iran.}

\email{fakhari@ipm.ir}

\urladdr{http://math.ipm.ac.ir/fakhari/}

% ------------------------------------------------------------------------

\begin{abstract}
Let $\mathbb{K}$ be a field and $S=\mathbb{K}[x_1,\dots,x_n]$ be the
polynomial ring in $n$ variables over the field $\mathbb{K}$. Suppose that $\mathcal{C}$ is a chordal clutter with $n$ vertices and assume that the minimum edge cardinality of $\mathcal{C}$ is at least $d$. It is shown that $S/I(c_d(\mathcal{C}))$ satisfies Stanley's conjecture, where $I(c_d(\mathcal{C}))$ is the edge ideal of the $d$-complement of $\mathcal{C}$. This, in particular shows that $S/I$ satisfies Stanley's conjecture, where $I$ is a quadratic monomial ideal with linear resolution. We also define the notion of Schmitt--Vogel number of a monomial ideal $I$, denoted by ${\rm sv}(I)$ and prove that for every squarefree monomial ideal $I$, the inequalities ${\rm sdepth}(I)\geq n-{\rm sv}(I)+1$ and ${\rm sdepth}(S/I)\geq n-{\rm sv}(I)$ hold.
\end{abstract}

% ------------------------------------------------------------------------

\subjclass[2000]{Primary: 13C15, 05E40; Secondary: 05C65, 13C13}

% ------------------------------------------------------------------------

\keywords{Stanley depth, Chordal clutter, Schmitt--Vogel number}

% ------------------------------------------------------------------------

\thanks{This research was in part supported by a grant from IPM (No. 93130422)}

% ------------------------------------------------------------------------

\maketitle

%%%%%%%%%%%%%%%%%%%%%%%%%%%%%%%%%%%%%%%%%%%%%%%%%%%%%%%%%%%%%%%%%%%%%%%%%%

\section{Introduction} \label{sec1}

Let $\mathbb{K}$ be a field and $S=\mathbb{K}[x_1,\dots,x_n]$ be the
polynomial ring in $n$ variables over the field $\mathbb{K}$. Let $M$ be a nonzero
finitely generated $\mathbb{Z}^n$-graded $S$-module. Let $u\in M$ be a
homogeneous element and $Z\subseteq \{x_1,\dots,x_n\}$. The $\mathbb
{K}$-subspace $u\mathbb{K}[Z]$ generated by all elements $uv$ with $v\in
\mathbb{K}[Z]$ is called a {\it Stanley space} of dimension $|Z|$, if it is
a free $\mathbb{K}[\mathbb{Z}]$-module. Here, as usual, $|Z|$ denotes the
number of elements of $Z$. A decomposition $\mathcal{D}$ of $M$ as a finite
direct sum of Stanley spaces is called a {\it Stanley decomposition} of
$M$. The minimum dimension of a Stanley space in $\mathcal{D}$ is called the
{\it Stanley depth} of $\mathcal{D}$ and is denoted by ${\rm
sdepth}(\mathcal {D})$. The quantity $${\rm sdepth}(M):=\max\big\{{\rm
sdepth}(\mathcal{D})\mid \mathcal{D}\ {\rm is\ a\ Stanley\ decomposition\
of}\ M\big\}$$ is called the {\it Stanley depth} of $M$. Stanley \cite{s}
conjectured that $${\rm depth}(M) \leq {\rm sdepth}(M)$$ for all
$\mathbb{Z}^n$-graded $S$-modules $M$. For a reader friendly introduction
to Stanley decomposition, we refer to \cite{psty} and for a nice survey on this topic we refer to \cite{h}.

It is shown in \cite[Corollary 4.5]{ikm} that in order to prove Stanley's Conjecture for the modules of the form $I/J$, where $J\subset I$ are monomial ideals, it is enough to consider the case when $I$ and $J$ are squarefree monomial ideals. Thus, in this paper, we restrict ourselves to squarefree monomial ideals.

In Section \ref{sec2}, we consider a class of monomial ideals with linear quotients. By a result of Fr${\rm \ddot{o}}$berg, we know that a quadratic squarefree monomial ideal ideal has linear resolution if and only if it is the edge ideal of a graph with chordal complement. Herzog, Hibi and Zheng \cite{hhz} prove that this is equivalent to say that $I$ has linear quotients. In \cite{w}, Woodroofe extends the definition of chordal to clutters (see Definition \ref{defchord}). Assume that $\mathcal{C}$ is chordal clutter and suppose that the minimum edge cardinality of $\mathcal{C}$ is at least $d$, for a fixed integer $d\geq 1$. Woodroofe proves that the edge ideal $I(c_d(\mathcal{C}))$ of the $d$-complement of $\mathcal{C}$ (defined in Section \ref{sec2}) has linear quotients. Using this result, in Theorem \ref{main}, we prove that $S/I(c_d(\mathcal{C}))$ satisfies Stanley's conjecture. As a consequence, we conclude that $S/I$ satisfies Stanley's conjecture, if $I$ is a quadratic (not necessarily squarefree) monomial ideal with linear resolution (see Corollary \ref{linres}).

In Section \ref{sec3}, we provide a lower bound for the Stanley depth of squarefree monomial ideals. In fact, for every monomial ideal $I$, we introduce the notion of Schmitt--Vogel number (see Definition \ref{sv}), denoted by ${\rm sv}(I)$ and prove that for every squarefree monomial ideal $I$, the inequalities ${\rm sdepth}(I)\geq n-{\rm sv}(I)+1$ and ${\rm sdepth}(S/I)\geq n-{\rm sv}(I)$ hold (see Theorem \ref{smain}).

%%%%%%%%%%%%%%%%%%%%%%%%%%%%%%%%%%%%%%%%%%%%%%%%%%%%%%%%%%%%%%%%%%%%%%%%%%

\section{Chordal clutters and Stanley's conjecture} \label{sec2}

In this section, we prove the first main result of this paper (Theorem  \ref{main}). Before starting the proof, we introduce some notation and well known facts.

If $I\subseteq S$ is a squarefree monomial ideal, we
can identify the set of minimal monomial generators of $I$ with the edge set of a clutter, defined as follows.

\begin{dfn}
Let $V$ be a finite set. A {\it clutter} $\mathcal{C}$ with vertex set $V$ consists of a set of
subsets of $V$, called the {\it edges} of $\mathcal{C}$, with the property that no edge contains another.
\end{dfn}

We write $V(\mathcal{C})$ to denote the vertex set of $\mathcal{C}$, and $E(\mathcal{C})$ to denote its edge set. Let $\mathcal{C}$ be a clutter and assume that $V(\mathcal{C})=\{v_1, \ldots, v_n\}$. For every subset $e\subseteq \{v_1, \ldots, v_n\}$, we write $x_e$ to denote the squarefree monomial $\prod_{v_i \in e} x_i$. Then the {\it edge ideal} of $\mathcal{C}$ is defined to be
$$I(\mathcal{C}) = (x_e : e\in E(\mathcal{C})),$$as an ideal in the polynomial ring $S = \mathbb{K}[x_1, \dots, x_n]$.

In the following definition, we mention two types of operations preformed on a clutter $\mathcal{C}$
to produce smaller clutters.
\begin{dfn}
Let $\mathcal{C}$ be a clutter and $v$ be a vertex of $\mathcal{C}$.
\begin{itemize}
\item[(i)] The {\it deletion} $\mathcal{C}\setminus \{v\}$ is the clutter with vertex set $V(\mathcal{C})\setminus\{v\}$ and edge set $E(\mathcal{C}\setminus v)=\{e\in E(\mathcal{C}): v\notin e\}$.

\item[(ii)] The {\it contraction} $\mathcal{C}/\{v\}$ is the clutter with vertex set $V(\mathcal{C})\setminus\{v\}$ whose edges are the minimal elements of the set $\{e\setminus \{v\}: e\in E(\mathcal{C})\}$.
\end{itemize}
A clutter obtained from $\mathcal{C}$ by applying a sequence of deletion and/or contraction is called a {\it minor} of $\mathcal{C}$.
\end{dfn}

Let $G$ be a graph. For a vertex $v\in V(G)$, the neighborhood of $v$ in $G$ is defined to be the set $N_G(v)=\{w\in V(G) : \{v,w\}\in E(G)\}$. A graph is called chordal if every cycle of length at least four has a chord. We recall that a chord of a cycle is an edge which joins two vertices of the cycle but is not itself an edge of the cycle. By Dirac's theorem \cite{D}, the graph $G$ is chordal if and only if every induced subgraph of $G$ has a simplicial vertex, i.e., a vertex whose neighborhood forms a complete subgraph of $G$. Woodroofe \cite{w} extends the concept of chordalness to clutters.

\begin{dfn} \label{defchord}
Let $\mathcal{C}$ be a clutter. A vertex $v\in V(\mathcal{C})$ is called a {\it simplicial vertex} if for every two distinct edges $e_1, e_2 \in E(\mathcal{C})$ containing $v$, there exists an edge $e_3 \in E(\mathcal{C})$ with $e_3\subseteq (e_1\cup e_2)\setminus \{v\}$. The clutter $\mathcal{C}$ is called {\it chordal} if every minor of $\mathcal{C}$ has a simplicial vertex.
\end{dfn}

For a clutter $\mathcal{C}$ and a fixed integer $d\geq 1$, Woodroofe \cite{w} defines the {\it $d$-complement} of $\mathcal{C}$, denoted by $c_d(\mathcal{C})$, to ba a clutter with the vertex set $V(\mathcal{C})$ and the edge set $\{e \subseteq V(\mathcal{C}): \ \mid e\mid = d \ {\rm and} \ e\notin E(\mathcal{C})\}$.

Let $I$ be a monomial ideal and let $G(I)$ be the set of minimal monomial generators of $I$. Assume that $u_1\succ u_2 \succ \ldots \succ u_t$ is a linear order on $G(I)$. We say that $I$ has {\it linear quotients with respect to $\succ$}, if for every $2\leq i\leq t$, the ideal $(u_1, \ldots, u_{i-1}):u_i$ is generated by a subset of variables. We say that $I$ has {\it linear quotients}, if it has linear quotients with respect to a linear order on $G(I)$.

Let $\mathcal{C}$ be a chordal clutter and assume that the minimum edge cardinality of $\mathcal{C}$ is at least $d$. In \cite{w}, Woodroofe proves that the edge ideal of $c_d(\mathcal{C})$ has linear quotients. Woodroofe's proof is based on some results regarding the Alexander duality of squarefree monomial ideals. Here we restate the proof for illustrating more information about the particular linear order on $G(I(c_d(\mathcal{C})))$ which satisfies the conditions of having linear quotients. Then we use these information to prove that Stanley's conjecture holds for $S/I(c_d(\mathcal{C}))$.

{\bf Notation.} Let $\mathcal{C}$ be a clutter with the vertex set $\{v_1, \ldots, v_n\}$. For every squarefree monomial $u\in S=\mathbb{K}[x_1,\dots,x_n]$ we set ${\mathbf e}(u)=\{v_i\in V(\mathcal{C}): x_i \ {\rm divides} \ u\}$.

\begin{prop} \label{linquo}
Let $\mathcal{C}$ be a chordal clutter and assume that the minimum edge cardinality of $\mathcal{C}$ is at least $d$. Suppose that $v_i$ is a simplicial vertex of $\mathcal{C}$. There exists a linear order $\succ$ on $G(I(c_d(\mathcal{C})))$ such that $I(c_d(\mathcal{C}))$ has linear quotients with respect to $\succ$ and moreover, $u\succ v$ for every pair of monomials $u, v\in G(I(c_d(\mathcal{C})))$ with $x_i\mid u$ and $x_i\nmid v$.
\end{prop}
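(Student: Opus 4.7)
The plan is to induct on $|V(\mathcal{C})|$ and to decompose $G(I(c_d(\mathcal{C})))$ relative to the simplicial vertex $v_i$. Set $\mathcal{C}' = \mathcal{C} \setminus \{v_i\}$ and $\mathcal{C}'' = \mathcal{C}/\{v_i\}$; both are chordal as minors of $\mathcal{C}$, and they satisfy the minimum-edge-cardinality hypothesis with parameters $d$ and $d-1$ respectively. Write $G(I(c_d(\mathcal{C}))) = A \cup B$, where $A$ consists of the generators divisible by $x_i$ and $B$ of the rest. Directly from the definitions, $B = G(I(c_d(\mathcal{C}')))$. Moreover, a $(d-1)$-subset $f \subseteq V(\mathcal{C}) \setminus \{v_i\}$ is automatically a minimal element of $\{e \setminus \{v_i\} : e \in E(\mathcal{C})\}$ (the minimum-cardinality hypothesis prevents any smaller element from appearing there), so $f \in E(\mathcal{C}'')$ if and only if $\{v_i\}\cup f \in E(\mathcal{C})$; consequently $A = x_i \cdot G(I(c_{d-1}(\mathcal{C}'')))$.

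By the inductive hypothesis applied to $\mathcal{C}'$ with parameter $d$ and to $\mathcal{C}''$ with parameter $d-1$, there exist linear orders $\succ_B$ on $B$ and $\succ_A'$ on $G(I(c_{d-1}(\mathcal{C}'')))$ yielding linear quotients. Since multiplying all generators of a monomial ideal by a common variable does not affect the relevant colon ideals, $\succ_A'$ lifts to a linear order $\succ_A$ on $A$ with linear quotients. Define $\succ$ on $G(I(c_d(\mathcal{C})))$ by concatenation: the elements of $A$ first, ordered by $\succ_A$, then the elements of $B$, ordered by $\succ_B$. Linear quotients inside $A$ and inside $B$ are then immediate from the induction, so the remaining task is to verify, for each $u = x_f \in B$, that the colon ideal of $u$ with respect to the earlier generators is variable-generated.

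The contribution from earlier generators of $B$ is variable-generated by the induction. For a generator $u' = x_i x_{f'} \in A$, the quotient $u'/\gcd(u',u) = x_i \cdot x_{f'\setminus f}$ is divisible by $x_i$, so it suffices to show that $x_i$ itself lies in the colon ideal; equivalently, that some $u'' = x_i x_{f''} \in A$ with $f'' \subseteq f$ and $|f''| = d-1$ exists, for then $u''/\gcd(u'',u) = x_i$. This is the crux. Assume for contradiction that $\{v_i\}\cup f'' \in E(\mathcal{C})$ for every $(d-1)$-subset $f''$ of $f$ (the case $d = 1$ is trivial since $I(c_1(\mathcal{C}))$ is generated by variables, so one may assume $d \geq 2$). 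Pick two distinct such $f''_1, f''_2$; their $v_i$-extensions $e_1 = \{v_i\}\cup f''_1$ and $e_2 = \{v_i\}\cup f''_2$ are then two distinct edges of $\mathcal{C}$ containing $v_i$, so simpliciality provides $e_3 \in E(\mathcal{C})$ with $e_3 \subseteq (e_1 \cup e_2)\setminus \{v_i\} = f$; the minimum-cardinality hypothesis forces $e_3 = f$, contradicting $x_f \in G(I(c_d(\mathcal{C})))$. The main obstacle is precisely this deployment of the simplicial-vertex property to manufacture the reducing monomial $u''$ at the $A$-to-$B$ transition; once this is in hand, the rest of the argument is bookkeeping.
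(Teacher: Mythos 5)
Your proof is correct and follows essentially the same approach as the paper: induct on the number of vertices, split $G(I(c_d(\mathcal{C})))$ into the part divisible by $x_i$ (identified with $x_i\cdot G(I(c_{d-1}(\mathcal{C}/\{v_i\})))$) and the part not divisible by $x_i$ (identified with $G(I(c_d(\mathcal{C}\setminus\{v_i\})))$), concatenate orders from the induction with the $x_i$-divisible block first, and use simpliciality of $v_i$ to produce the reducing generator at the $A$-to-$B$ transition. The only cosmetic difference is that you verify the identification of the $x_i$-divisible block directly rather than citing Woodroofe's Lemma 6.7.
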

\begin{proof}
Set $I=I(c_d(\mathcal{C}))$. There is nothing to prove if $n=1$ or $d=1$. Thus assume that $n\geq 2$ and $d\geq 2$. We prove the assertion by induction on the number of vertices $n$. Note that $(I: x_i)$ is the edge ideal of $c_d(\mathcal{C})/\{v_i\}$. By \cite[Lemma 6.7]{w}, $c_d(\mathcal{C})/\{v_i\}$ is the $d-1$-complement of $\mathcal{C}/\{v_i\}$. This shows that $(I: x_i)$ is generated in a single degree, namely $d-1$ and thus $$G((I:x_i))=\{\frac{u}{x_i} : u\in G(I) \ {\rm and} \ x_i\mid u\}.$$On the other hand $\mathcal{C}/\{v_i\}$ is a chordal clutter and therefore, using the induction hypothesis, we conclude that $(I: x_i)$ has linear quotients. This shows that the ideal $(u\in G(I): x_i\mid u)$ has linear quotients. Thus, there exists a linear order $u_1\succ u_2 \succ \ldots \succ u_t$ on $G((I: x_i))$ such that for every $2\leq j\leq t$, the ideal $(u_1, \ldots, u_{j-1}):u_j$ is generated by a subset of variables.

The ideal $I':=(u\in G(I): x_i\nmid u)$ is the edge ideal of $c_d(\mathcal{C})\setminus \{v\}=c_d(\mathcal{C}\setminus \{v\})$. Since $\mathcal{C}\setminus \{v\}$ is a chordal clutter, the induction hypothesis implies that the ideal $(u\in G(I): x_i\nmid u)$ has linear quotients. Thus, there is a linear order $w_1\succ w_2 \succ \ldots \succ w_s$ on $G(I')$ such that for every $2\leq j\leq s$, the ideal $(w_1, \ldots, w_{j-1}):w_j$ is generated by a subset of variables.

We claim that $I$ has linear quotients with respect to the following order on $G(I)$ $$u_1\succ u_2 \succ \ldots \succ u_t\succ w_1\succ w_2 \succ \ldots \succ w_s.$$Note that for every $1\leq k\leq t$ and every $1\leq m \leq s$, the variable $x_i$ divides the monomial $u_k/{\rm gcd}(u_k, w_m)$. Hence, in order to prove the claim it is enough to show that for every $1\leq m \leq s$ there exists a variable, say $x_{\ell}$ dividing $w_m$, such that $x_iw_m/x_{\ell}\in G(I)$. Assume by contradiction that for every variable $x_{\ell}$ dividing $w_m$ we have $x_iw_m/x_{\ell}\notin G(I)$. Since $d\geq 2$, this shows that there exist two distinct integers $\ell_1, \ell_2\neq i$ such that $x_{\ell_1}$ and $x_{\ell_2}$ divide $w_m$ and ${\mathbf e}(x_iw_m/x_{\ell_1})=({\mathbf e}(w_m)\setminus \{v_{\ell_1}\})\cup \{v_i\}$ and ${\mathbf e}(x_iw_m/x_{\ell_2})=({\mathbf e}(w_m)\setminus \{v_{\ell_2}\})\cup \{v_i\}$ are edges of $\mathcal{C}$. Since $v_i$ is a simplicial vertex of $\mathcal{C}$ and the minimum cardinality of edges of $\mathcal{C}$ is at least $d$, we conclude that ${\mathbf e}(w_m)$ is an edge of $\mathcal{C}$ and thus $w_m\notin G(I)$. This is a contradiction and completes the proof of the proposition.
\end{proof}

The following result due to Sharifan and Varbaro has a crucial role in the proof of our main result.

\begin{thm} [\cite{sv}, Corollary 2.7] \label{pd}
Let $I\subseteq S=\mathbb{K}[x_1,\dots,x_n]$ be a monomial ideal. Assume that $I$ has linear quotients with respect to $u_1\succ u_2 \succ \ldots \succ u_t$, where $\{u_1, \ldots, u_t\}$ is the set of minimal
monomial generators of $I$. For every $2\leq i\leq t$, let $n_i$ be the number of variables which generate $(u_1, \ldots, u_{i-1}):u_i$. Then $${\rm pd}_S(S/I)={\rm max}\{n_i: 2\leq i\leq t\}+1.$$
\end{thm}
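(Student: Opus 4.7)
The plan is to use the iterated mapping cone construction tailored to ideals with linear quotients. Setting $I_i := (u_1, \ldots, u_i)$ for $1 \le i \le t$, the linear-quotient hypothesis furnishes, for each $i \ge 2$, a short exact sequence
\begin{equation*}
0 \longrightarrow \bigl(S/(I_{i-1}:u_i)\bigr)(-\deg u_i) \xrightarrow{\,\cdot\, u_i\,} S/I_{i-1} \longrightarrow S/I_i \longrightarrow 0
\end{equation*}
in which $(I_{i-1}:u_i)$ is generated by $n_i$ distinct variables; these form a regular sequence, so their Koszul complex $K^{(i)}_\bullet$ is a minimal free resolution of $S/(I_{i-1}:u_i)$ of length exactly $n_i$.

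Starting from the two-term resolution of $S/I_1 = S/(u_1)$, I would inductively define a free resolution $F^{(i)}_\bullet$ of $S/I_i$ as the mapping cone of a comparison map $K^{(i)}_\bullet(-\deg u_i) \to F^{(i-1)}_\bullet$ lifting multiplication by $u_i$. The standard mapping cone estimate gives $\mathrm{length}(F^{(i)}_\bullet) \le \max\bigl(n_i + 1,\ \mathrm{length}(F^{(i-1)}_\bullet)\bigr)$, so iterating up to $i = t$ yields the upper bound $\mathrm{pd}_S(S/I) \le \max\{n_i : 2 \le i \le t\} + 1$.

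The main obstacle is proving that this iterated mapping cone is in fact \emph{minimal}, which is what promotes the above inequality to equality. At each stage one must exhibit a comparison lift $K^{(i)}_\bullet(-\deg u_i) \to F^{(i-1)}_\bullet$ all of whose matrix entries lie in the maximal ideal $\mathfrak{m}$; then the differentials of the cone remain in $\mathfrak{m}$ and no cancellation occurs. Here the linear-quotient hypothesis is indispensable: for each generating variable $x_j$ of $(I_{i-1}:u_i)$, the product $x_j u_i$ belongs to $I_{i-1}$ and, because $u_i$ is a minimal generator of $I_i$, admits an expression as an $S$-combination of $u_1, \ldots, u_{i-1}$ with all coefficients in $\mathfrak{m}$. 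Propagating this coefficient-in-$\mathfrak{m}$ property through successive cones, so that no previously added free summand is killed at a later stage, is the delicate bookkeeping step that deserves the most care.

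Once minimality has been secured, the free summands contributed by each Koszul complex simply add across the iteration to yield
\begin{equation*}
\beta_i(S/I) \;=\; \sum_{k=2}^{t} \binom{n_k}{i-1},
\end{equation*}
and the largest index $i$ for which this sum is nonzero is precisely $\max\{n_k : 2 \le k \le t\} + 1$, giving the asserted formula for $\mathrm{pd}_S(S/I)$.
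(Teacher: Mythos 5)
The paper does not prove this theorem; it is quoted verbatim from Sharifan--Varbaro \cite{sv}, so there is no in-paper argument to compare against. Evaluating your sketch on its own merits: the iterated mapping-cone framework is indeed the right setting, and your degree-one observation is correct --- if $x_j u_i = c\,u_l$ with $c\in\mathbb{K}^\times$, then $u_i$ divides $u_l$, contradicting minimality of the generating set, so the first comparison map $\psi_1$ can always be taken with entries in $\mathfrak{m}$.

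The genuine gap is exactly the step you flag as ``the delicate bookkeeping.'' Showing that the comparison lift $\psi^{(i)}_j\colon K^{(i)}_j(-\deg u_i)\to F^{(i-1)}_j$ can be chosen with entries in $\mathfrak{m}$ for \emph{all} homological degrees $j$, not merely $j=0,1$, is not bookkeeping but is the entire mathematical content of the theorem. For an ideal with linear quotients whose generators do not all have the same degree, there is no automatic reason the iterated mapping cone is minimal: the Herzog--Takayama machinery (``Resolutions by mapping cones'') secures minimality only under extra hypotheses such as a regular decomposition function or a single generating degree, and neither is assumed here. Sharifan--Varbaro's contribution is precisely to remove these hypotheses, by arguing directly that the maps $\mathrm{Tor}_j^S(\phi_i,\mathbb{K})$ vanish so the long exact Tor sequences break into short exact sequences and the Betti numbers add; your sketch asserts the conclusion of that analysis but supplies no argument for the higher-degree lifts. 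As written, your argument establishes only the inequality $\mathrm{pd}_S(S/I)\le\max\{n_i\}+1$, not the equality.

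A smaller issue: your Betti number formula $\beta_i(S/I)=\sum_{k=2}^t\binom{n_k}{i-1}$ gives $\beta_1(S/I)=t-1$, but the first Betti number of $S/I$ equals the number of minimal generators, which is $t$. The sum should run from $k=1$ with the convention $n_1=0$ (since $(0):u_1=0$). This off-by-one does not affect the projective dimension claim, but it should be corrected.
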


Keeping the notations of Theorem \ref{pd} in mind, Auslander--Buchsbaum formula implies that $${\rm depth}_S(S/I)=n-{\rm max}\{n_i: 2\leq i\leq t\}-1.$$

Let $\mathcal{C}$ be a clutter with a simplicial vertex and assume the the minimum edge cardinality of $\mathcal{C}$ is at least $d$. In order to prove Stanley's conjecture for $S/I(c_d(\mathcal{C}))$, we need the following lemma. It shows that the depth of $I(c_d(\mathcal{C}))$ does not decrease under the elimination of a suitable variable. As usual for every monomial $u$, the {\it support} of $u$, denoted by ${\rm Supp}(u)$, is the set of variables which divide $u$.

\begin{lem} \label{del}
Let $\mathcal{C}$ be a clutter and assume that the minimum edge cardinality of $\mathcal{C}$ is at least $d$. Suppose that $v_i$ is a simplicial vertex of $\mathcal{C}$. Set $I=I(c_d(\mathcal{C}))$ and assume that $$x_i\in \bigcup_{u\in G(I)}{\rm Supp}(u).$$Let $S'=\mathbb{K}[x_1, \ldots, x_{i-1}, x_{i+1}, \ldots,  x_n]$ be the polynomial ring obtained from $S$ by deleting the variable $x_i$ and consider the ideal $I'=I\cap S'$. Then ${\rm depth}_{S'}(S'/I')\geq {\rm depth}_S(S/I)$.
\end{lem}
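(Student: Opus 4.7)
The plan is to compare ${\rm pd}_S(S/I)$ with ${\rm pd}_{S'}(S'/I')$ by exploiting the linear quotients structure from Proposition \ref{linquo} together with Theorem \ref{pd}, and then to translate the resulting projective dimension inequality into the desired depth inequality via Auslander--Buchsbaum. I take $d \geq 2$ throughout; the case $d=1$ admits a direct verification. (In the setting where this lemma will actually be applied, $\mathcal{C}$ is chordal, which is what permits invoking Proposition \ref{linquo}.)

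First I would apply Proposition \ref{linquo} at the simplicial vertex $v_i$ to obtain a linear order $u_1 \succ \cdots \succ u_t \succ w_1 \succ \cdots \succ w_s$ on $G(I)$ with respect to which $I$ has linear quotients, where $x_i \mid u_k$ for each $k$ and $x_i \nmid w_m$ for each $m$; the support hypothesis on $x_i$ guarantees $t \geq 1$. The monomials $w_1, \ldots, w_s$ are exactly the minimal generators of $I'$, and by the same inductive construction their induced order gives $I'$ linear quotients in $S'$.

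For $2 \leq m \leq s$, let $V_m$ be the set of variables generating $(w_1, \ldots, w_{m-1}):w_m$ in $S'$ and let $N_m$ be the set of variables generating $(u_1, \ldots, u_t, w_1, \ldots, w_{m-1}):w_m$ in $S$. The key claim is $N_m = V_m \sqcup \{x_i\}$. If some $u_k$ divides $x_\ell w_m$, then because $u_k$ and $w_m$ are both squarefree of degree $d$, $x_i \mid u_k$, and $x_i \nmid w_m$, a support count forces $x_\ell = x_i$ and $u_k = x_i w_m / x_p$ for some $x_p \in {\rm Supp}(w_m)$. Conversely, no $w_j$ with $j < m$ can divide $x_i w_m$: such a $w_j$ would satisfy ${\rm Supp}(w_j) \subseteq {\rm Supp}(w_m)$, forcing $w_j = w_m$ (both supports have size $d$), a contradiction. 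So $N_m \subseteq V_m \cup \{x_i\}$ and $V_m \cap \{x_i\} = \emptyset$. That $x_i \in N_m$ always holds is the key combinatorial point: otherwise, for every $v_p \in {\mathbf e}(w_m)$ the set $({\mathbf e}(w_m) \setminus \{v_p\}) \cup \{v_i\}$ would be an edge of $\mathcal{C}$, and applying the simplicial condition at $v_i$ to two such edges (at least two exist because $d \geq 2$) would force ${\mathbf e}(w_m) \in E(\mathcal{C})$, contradicting $w_m \in G(I)$. This is precisely the argument used at the end of the proof of Proposition \ref{linquo}.

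Writing $n_j$ and $n'_m$ for the variable counts appearing in Theorem \ref{pd}, the claim gives $n_{t+m} = n'_m + 1$ for every $2 \leq m \leq s$, hence
$${\rm pd}_S(S/I) = 1 + \max_j n_j \geq 1 + \max_m n_{t+m} = 2 + \max_m n'_m = 1 + {\rm pd}_{S'}(S'/I').$$
Auslander--Buchsbaum on both sides then yields
$${\rm depth}_{S'}(S'/I') = (n-1) - {\rm pd}_{S'}(S'/I') \geq n - {\rm pd}_S(S/I) = {\rm depth}_S(S/I),$$
as required. The main obstacle is the colon analysis in the previous paragraph, and within it the verification that $x_i$ always enters $N_m$; everything else is straightforward bookkeeping.
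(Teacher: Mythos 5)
Your proof is correct and follows essentially the same route as the paper's: invoke Proposition \ref{linquo} to get the special linear quotient order on $G(I)$ in which the generators divisible by $x_i$ come first, use the simplicial-vertex argument to force $x_i$ into each relevant colon ideal, then pass through Theorem \ref{pd} and Auslander--Buchsbaum. The only substantive difference is that you prove the sharper equality $n_{t+m}=n'_m+1$ by additionally checking that no $u_k$ contributes a variable other than $x_i$ and no $w_j$ divides $x_i w_m$, whereas the paper is content with the one-sided inclusion $(x_i)+\bigl((u_{t+1},\ldots,u_{k-1}):u_k\bigr)\subseteq (u_1,\ldots,u_{k-1}):u_k$, which already yields $n_k\geq n'_{k-t}+1$ and hence ${\rm pd}_S(S/I)\geq {\rm pd}_{S'}(S'/I')+1$. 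You also correctly flag that the statement, as written, hides a chordality hypothesis on $\mathcal{C}$ needed for Proposition \ref{linquo}. Both your proof and the paper's leave the degenerate cases $I'=0$ and $s=1$ to a quick side remark, and your handling is consistent with theirs.
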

\begin{proof}
We first note that $I\neq 0$, since $$x_1\in \bigcup_{u\in G(I)}{\rm Supp}(u).$$If $I'=0$, then ${\rm depth}_{S'}(S'/I')=n-1\geq {\rm depth}_S(S/I)$. Therefore, assume that $I'\neq 0$. There is nothing to prove if $d=1$. Thus, assume that $d\geq 2$. By Proposition \ref{linquo}, there exists a linear order $\succ$ on $G(I)$ such that $I$ has linear quotients with respect to $\succ$ and moreover, $u\succ v$ for every pair of monomials $u, v\in G(I)$ with $x_i\mid u$ and $x_i\nmid v$. Let $G(I)=\{u_1, \ldots, u_m\}$ be the set of minimal monomial generators of $I$ and assume that $u_1 \succ \cdots \succ u_m$. By assumption, there exists and integer $t$ with $1\leq t \leq m$, such that $x_i$ divides $u_1, \ldots, u_t$ and does not divide $u_{t+1}, \ldots, u_m$. We claim that for every integer $k$ with $t+1\leq k \leq m$, there exists an integer $1\leq j_k \leq n$ with $j_k\neq i$ such that $(u_k/x_{j_k})x_i\in G(I)$. Indeed, assume that this is not the case. Since $d\geq 2$, there exist integers $\ell\neq \ell'$ with $(u_k/x_{\ell})x_i\notin G(I)$ and $(u_k/x_{\ell'})x_i\notin G(I)$. This means that ${\mathbf e}(u_k/x_{\ell})x_i)$ and ${\mathbf e}(u_k/x_{\ell'})x_i)$ are edges of $\mathcal{C}$ and contain the vertex $v_i$. Since $v_i$ is a simplicial vertex and since the minimum edge cardinality of $\mathcal{C}$ is at least $d$, we conclude that ${\mathbf e}(u_k)$ is an edge of $\mathcal{C}$ and hence $u_k\notin G(I)$ which is a contradiction and proves the claim. This shows that for every $k$ with $t+2\leq k \leq m$,$$(x_i)+((u_{t+1}, \ldots, u_{k_1}):u_{k})\subseteq ((u_k/x_{j_k})x_i, u_{t+1}, \ldots, u_{k-1})):u_k\subseteq (u_1, \ldots, u_{k-1}):u_k.$$On the other hand, it is clear that$$x_1\notin (u_{t+1}, \ldots, u_{k-1}):u_k$$Therefore, by Theorem \ref{pd}, we conclude that ${\rm pd}_S(S/I)\geq {\rm pd}_{S'}(S'/I')+1$. Now Auslander--Buchsbaum formula completes the proof of the Lemma.
\end{proof}

We are now ready to prove the main result of this section.

\begin{thm} \label{main}
Let $\mathcal{C}$ be a chordal clutter and assume that the minimum edge cardinality of $\mathcal{C}$ is at least $d$. Then $S/I(c_d(\mathcal{C}))$ satisfies Stanley's conjecture.
\end{thm}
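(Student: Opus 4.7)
My plan is to prove the theorem by induction on $n$. The case $d=1$ is trivial: $I(c_1(\mathcal{C}))$ is generated by a subset of the variables, so $S/I$ is a polynomial ring and satisfies Stanley's conjecture. For $n,d \geq 2$, I use chordality of $\mathcal{C}$ to pick a simplicial vertex $v_i$, set $S'=\mathbb{K}[x_1,\ldots,x_{i-1},x_{i+1},\ldots,x_n]$, and split on whether $x_i$ divides some minimal generator of $I = I(c_d(\mathcal{C}))$.

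If no generator of $I$ involves $x_i$, then $I$ is extended from $I^* = I(c_d(\mathcal{C}\setminus\{v_i\}))\subseteq S'$ and $S/I\cong (S'/I^*)[x_i]$. Under such adjunction of a free variable, both depth and Stanley depth increase by one, so the induction hypothesis applied to the chordal minor $\mathcal{C}\setminus\{v_i\}$ (whose minimum edge cardinality is still at least $d$) finishes this case.

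Otherwise $x_i$ divides some minimal generator. I would decompose
\[
S/I \;=\; S'/I' \;\oplus\; x_i\cdot S/(I:x_i)
\]
as $\mathbb{K}$-vector spaces, where $I'=I\cap S'$. Gluing Stanley decompositions of the two summands (those of the first piece using only variables in $S'$, those of the second shifted by $x_i$) yields the Rauf-style inequality
\[
{\rm sdepth}_S(S/I) \;\geq\; \min\{{\rm sdepth}_{S'}(S'/I'),\,{\rm sdepth}_S(S/(I:x_i))\}.
\]
As observed in the proof of Proposition \ref{linquo}, $I' = I(c_d(\mathcal{C}\setminus\{v_i\}))$ and $(I:x_i)$ is the extension to $S$ of $I(c_{d-1}(\mathcal{C}/\{v_i\}))$. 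Both $\mathcal{C}\setminus\{v_i\}$ and $\mathcal{C}/\{v_i\}$ are chordal clutters on $n-1$ vertices with the appropriate minimum-edge-cardinality bounds, so the induction hypothesis applies to each summand.

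Finally, I want to promote each summand's Stanley depth into a bound on ${\rm depth}_S(S/I)$. Lemma \ref{del} gives ${\rm depth}_{S'}(S'/I')\geq{\rm depth}_S(S/I)$ directly. For the other summand, I would combine the short exact sequence
\[
0 \longrightarrow S/(I:x_i)(-1) \xrightarrow{\;x_i\;} S/I \longrightarrow S/(I,x_i) \longrightarrow 0,
\]
the identification $S/(I,x_i)\cong S'/I'$, and Lemma \ref{del} once more: the standard depth inequality for short exact sequences then yields ${\rm depth}_S(S/(I:x_i))\geq{\rm depth}_S(S/I)$. Chaining the bounds produces ${\rm sdepth}_S(S/I)\geq{\rm depth}_S(S/I)$ and closes the induction. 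The main obstacle is really the depth comparison for the second summand; Lemma \ref{del} carries the weight, and the remaining work is routine homological bookkeeping together with the gluing of Stanley decompositions along the two pieces of $S/I$.
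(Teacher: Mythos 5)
Your proof is correct and follows essentially the same route as the paper: induction on $n$, choose a simplicial vertex $v_i$, split on whether $x_i$ appears in a generator, then use the $\mathbb{K}$-vector-space decomposition $S/I = S'/I' \oplus x_i\,S/(I:x_i)$ together with the identifications $I'=I(c_d(\mathcal{C}\setminus\{v_i\}))$ and $(I:x_i)=I(c_{d-1}(\mathcal{C}/\{v_i\}))$ and Lemma~\ref{del}. The one place you diverge is in justifying ${\rm depth}_S(S/(I:x_i))\geq{\rm depth}_S(S/I)$: the paper simply cites Rauf's general result (\cite[Corollary 1.3]{r2}, which says depth does not decrease when passing from $S/I$ to $S/(I:x_j)$), whereas you rederive it from the short exact sequence $0\to S/(I:x_i)(-1)\to S/I\to S/(I,x_i)\to 0$, the depth lemma, and a second application of Lemma~\ref{del} (to bound ${\rm depth}_S(S/(I,x_i)) = {\rm depth}_{S'}(S'/I')\geq{\rm depth}_S(S/I)$, hence ${\rm depth}(S/(I,x_i))+1 > {\rm depth}(S/I)$). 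That derivation checks out and is more self-contained, at the cost of being tied to the chordal/simplicial-vertex setting rather than holding for all monomial ideals as Rauf's statement does. One small bookkeeping point: since the induction is on $n$ you should state the $n=1$ base case explicitly (for $n=1$ and $d\geq 2$ one has $I=0$; for $d=1$, $I$ is generated by variables); your remark that ``$d=1$ is trivial'' is true but is not itself a base of the induction on $n$.
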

\begin{proof}
Set $I=I(c_d(\mathcal{C}))$. We prove the assertion by induction on $n$, where $n$ is the number of vertices of $\mathcal{C}$. If
$n=1$, then $I$ is a principal ideal and so we have ${\rm depth}(S/I)=n-1$ and by \cite[Theorem 1.1]{r},
${\rm sdepth}(S/I)=n-1$. Therefore, in
this case, the assertion is trivial.

We now assume that $n\geq 2$. Without loss of generality assume that $v_1$ is a simplicial vertex of $\mathcal{C}$. Let $S'=\mathbb{K}[x_2, \ldots, x_n]$ be the polynomial ring obtained from $S$ by deleting the variable $x_1$ and consider the ideals $I'=I\cap S'$ and
$I''=(I:x_1)$. If $$x_1\notin \bigcup_{u\in G(I)}{\rm Supp}(u),$$then
${\rm depth}(S/I)={\rm depth}(S'/I')+1$. Also, by \cite[Lemma 3.6]{hvz}, we conclude that ${\rm sdepth}(S/I)={\rm
sdepth}(S'/I')+1$. On the other hand, in the case we have $c_d(\mathcal{C})=c_d(\mathcal{C}\setminus \{v\})$ and thus $I'$ is the edge ideal of $c_d(\mathcal{C}\setminus \{v\})$.
Therefore, using the induction hypothesis, we conclude that ${\rm sdepth}(S/I)\geq {\rm depth}(S/I)$.
Hence, we may assume that $$x_1\in \bigcup_{u\in G(I)}{\rm Supp}(u).$$
Now $S/I=(S'/I'S')\oplus x_1(S/I''S)$ and therefore by the definition of  the Stanley depth we have

\[
\begin{array}{rl}
{\rm sdepth}(S/I)\geq \min \{{\rm sdepth}_{S'}(S'/I'S'), {\rm sdepth}_S(S/I'')\}.
\end{array} \tag{1} \label{1}
\]

Using \cite[Lemma 6.7]{w}, it follows that $I''$ is the edge ideal of $c_{d-1}(\mathcal{C}/\{v\})$. Note that $\mathcal{C}/\{v\}$ is a chordal clutter with minimum edge cardinality at least $d-1$. Hence \cite[Lemma 3.6]{hvz}, \cite[Corollary 1.3]{r2} and the induction hypothesis implies that $${\rm sdepth}_S(S/I'')= {\rm sdepth}_{S'}(S'/I''S')+1 \geq {\rm depth}_{S'}(S'/I''S')+1$$
$$={\rm depth}_S(S/I'')\geq {\rm depth}_S(S/I).$$

On the other hand, $I'S'$ is the edge ideal of $c_d(\mathcal{C}\setminus \{v\})$ and since $$x_1 \in \bigcup_{u\in G(I)} {\rm Supp}(u),$$using Lemma \ref{del}, we conclude that ${\rm depth}_{S'}(S'/I')\geq {\rm depth}_S(S/I)$ and since $\mathcal{C}\setminus \{v\}$ is a chordal clutter, using
the induction hypothesis, we conclude that ${\rm sdepth}_{S'}(S'/I'S')\geq {\rm depth}_S(S/I)$. Now the assertions follow by inequality (\ref{1}).
\end{proof}

If we restrict our attention to the graphs, we obtain the following corollary. We mention that for a graph $G=(V(G),E(G))$, its {\it complementary graph} $\overline{G}$ is a graph with $V(\overline{G})=V(G)$ and $E(\overline{G})$
consists of those $2$-element subsets $\{v_i,v_j\}$ of $V(G)$ for which $\{v_i,v_j\}\notin E(G)$.

\begin{cor} \label{chord}
Let $G$ be a graph with chordal complement and $I=I(G)$ its edge ideal. Then $S/I$ satisfies Stanley's conjecture.
\end{cor}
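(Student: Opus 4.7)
The plan is to realize the edge ideal $I = I(G)$ as $I(c_d(\mathcal{C}))$ for a suitable chordal clutter $\mathcal{C}$ and integer $d$, so that the statement follows by a single application of Theorem \ref{main}. The natural choice is $\mathcal{C} := \overline{G}$, regarded as a $2$-uniform clutter, together with $d := 2$. Every edge of $\overline{G}$ has cardinality exactly $2$, so the minimum-edge-cardinality hypothesis of Theorem \ref{main} holds. Unwinding the definition of the $d$-complement, the edges of $c_2(\overline{G})$ are the $2$-element subsets of $V(G)$ that are not in $E(\overline{G})$, which are exactly the edges of $G$; hence $I(c_2(\overline{G})) = I(G) = I$.

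The only substantive point is to check that $\overline{G}$ is chordal in the clutter sense of Definition \ref{defchord}, not merely in the graph-theoretic sense. For a $2$-uniform clutter, the clutter-simplicial condition reduces to the graph-simplicial condition: any witness $e_3 \subseteq \{a,b\}$ must itself have cardinality $2$, so $e_3 = \{a,b\}$. Thus every deletion minor of $\overline{G}$ is an induced subgraph of a chordal graph, and Dirac's theorem supplies a simplicial vertex. For a minor in which at least one contraction has been performed, any surviving vertex $w$ that was a neighbor of a contracted vertex $v$ appears only in the singleton edge $\{w\}$, because any larger edge through $w$ would properly contain $\{w\}$ and be eliminated by the minimality clause in the contraction. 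A vertex lying in a unique edge is vacuously simplicial, so such a $w$ serves as the required simplicial vertex. Iterating, every minor of $\overline{G}$ has a simplicial vertex, so $\overline{G}$ is a chordal clutter.

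With these two observations in place, applying Theorem \ref{main} to $\mathcal{C} = \overline{G}$ with $d = 2$ yields the corollary immediately. I expect the clutter-chordality verification for $\overline{G}$ to be the only mildly technical step; it is essentially a careful unpacking of Definition \ref{defchord} together with Dirac's theorem, with no genuine combinatorial obstacle. Everything else is formal.
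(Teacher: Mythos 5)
Your reduction---take $\mathcal{C}=\overline{G}$ and $d=2$, note $I(c_2(\overline{G}))=I(G)$, and apply Theorem \ref{main}---is exactly the intended proof; the paper states the corollary without argument, implicitly relying on Woodroofe's observation that a chordal graph is chordal as a clutter, and your explicit verification of that fact is a welcome addition. One point to tighten: your dichotomy (pure-deletion minors versus minors involving a contraction) is not quite exhaustive as written, since a minor produced using a contraction may have had all its resulting singleton vertices subsequently deleted; such a minor is then again a $2$-uniform clutter isomorphic to an induced subgraph of $\overline{G}$, so it falls under the Dirac case, and the one remaining degenerate possibility (a minor whose unique edge is $\emptyset$, arising from contracting a vertex of a singleton edge) has every vertex vacuously simplicial.
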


We are now able to prove Stanley's conjecture for every quadratic (not necessarily squarefree) monomial ideal with linear resolution. We recall that a monomial ideal $I$ is said to have {\it linear resolution}, if for some integer $t$, the graded Betti numbers $\beta_{i,i+j}(I)$ vanish,
for all $i$ and every $j\neq t$.

\begin{cor} \label{linres}
Let $I$ be a quadratic monomial ideal with linear resolution. Then $S/I$ satisfies Stanley's conjecture.
\end{cor}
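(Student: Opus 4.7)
The plan is to reduce from the non-squarefree quadratic case to the squarefree quadratic case by polarization, then apply Corollary \ref{chord}. Let $I^{\mathrm{pol}}\subseteq S^{\mathrm{pol}}$ denote the polarization of $I$: each generator $x_i^{2}$ of $I$ is replaced by $x_ix_i'$ with $x_i'$ a fresh variable, while the squarefree generators $x_ix_j$ are left unchanged. Hence $I^{\mathrm{pol}}$ is a squarefree monomial ideal generated in degree $2$. Since polarization preserves all graded Betti numbers, $I^{\mathrm{pol}}$ has linear resolution as well.

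With this reduction in hand, I would invoke Fröberg's theorem (the same classical result cited at the start of Section \ref{sec2}): a quadratic squarefree monomial ideal has linear resolution if and only if it is the edge ideal of a graph whose complement is chordal. Therefore $I^{\mathrm{pol}}=I(G)$ for some graph $G$ on the vertex set of $S^{\mathrm{pol}}$ with $\overline{G}$ chordal. By Corollary \ref{chord}, applied inside the polynomial ring $S^{\mathrm{pol}}$, the module $S^{\mathrm{pol}}/I^{\mathrm{pol}}$ satisfies Stanley's conjecture.

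Finally, to transfer the conclusion back to $S/I$, I would use \cite[Corollary 4.5]{ikm} quoted in the introduction: the validity of Stanley's conjecture for a module of the form $S/I$, where $I$ is an arbitrary monomial ideal, follows from its validity for the corresponding squarefree module obtained by polarization. Applying this to our situation, the fact that $S^{\mathrm{pol}}/I^{\mathrm{pol}}$ satisfies Stanley's conjecture implies that $S/I$ does as well.

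Since every step of this plan is an immediate consequence of a result either proved or quoted above, no serious obstacle is expected; the only points requiring care are verifying that polarization preserves both the degree of the generators and the linearity of the resolution (both standard facts about polarization), and correctly invoking \cite[Corollary 4.5]{ikm} in order to avoid re-proving the reduction to the squarefree case by hand.
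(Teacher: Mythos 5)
Your proposal is correct and follows exactly the same route as the paper: polarize $I$ to a squarefree quadratic ideal with linear resolution, invoke Fröberg's theorem to recognize it as the edge ideal of a graph with chordal complement, apply Corollary \ref{chord}, and transfer back via the polarization result of Ichim, Katthän and Moyano--Fernández. The only cosmetic difference is that the paper cites \cite[Corollary 1.6.3]{hh'} for the preservation of linear resolution under polarization, where you appeal to the preservation of graded Betti numbers; these are the same fact.
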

\begin{proof}
We use polarization (see \cite{hh'} for the definition of polarization). Let $I^p$ denote the polarization of $I$ which is considered in a new polynomial ring, say $T$. Then $I^p$ is a quadratic squarefree monomial ideal. On the other hand, it follows from \cite[Corollary 1.6.3]{hh'} that $I^p$ has linear resolution. Using Fr${\rm \ddot{o}}$berg's result \cite[Theorem 1]{f}, we conclude that $I^p$ is the edge ideal of a graph with chordal complement. Thus Corollary \ref{chord} implies that $T/I^p$ satisfies Stanley's conjecture. Now \cite[Corollary 4.5]{ikm} implies that $S/I$ satisfies Stanley's conjecture.
\end{proof}

We close this section by the following remark.

\begin{rem}
Let $\mathcal{C}$ be a chordal clutter and assume that the minimum edge cardinality of $\mathcal{C}$ is at least $d$. In Theorem \ref{main}, we showed that $S/I(c_d(\mathcal{C}))$ satisfies Stanley's conjecture. It is natural to ask whether $I(c_d(\mathcal{C}))$ itself satisfies Stanley's conjecture. The answer of this question is positive. Indeed, Soleyman Jahan \cite{so} proves that Stanley's conjecture holds true for every monomial ideal with linear quotients.
\end{rem}

%%%%%%%%%%%%%%%%%%%%%%%%%%%%%%%%%%%%%%%%%%%%%%%%%%%%%%%%%%%%%%%%%%%%%%%%%%

\section{Schmitt--Vogel number and Stanley depth} \label{sec3}

In this section we provide a lower bound for the Stanley depth of squarefree monomial ideals. The lower bound is given in terms of the Schmitt--Vogel number which is defined in the following definition.

\begin{dfn} \label{sv}
Let $I$ be a monomial ideal and let ${\rm Mon}(I)$ be the set of monomials of $I$. The Schmitt--Vogel number of $I$, denoted by ${\rm sv}(I)$ is the smallest integer $r$ for which there exist subsets $P_1, P_2, \ldots, P_r$ of ${\rm Mon}(I)$ such that
\begin{itemize}
\item[(i)] $\mid P_1\mid =1$ and

\item [(ii)] For all $\ell$ with $1 <\ell\leq r$ and for all $u, u''\in P_{\ell}$ with $u\neq u''$, there exists an integer $\ell'$ with $1 \leq \ell' < \ell$ and an element $u'\in P_{\ell'}$ such that $uu''\in (u')$

\item[(iii)] $I$ is generated by the set $\bigcup_{i=1}^rP_i$.
\end{itemize}
\end{dfn}

\begin{rem}
Schmitt and Vogel \cite[p. 249]{sv1} prove that for every monomial ideal $I$, the quantity ${\rm sv}(I)$ is an upper bound for the arithmetical rank of $I$.
\end{rem}

Let $P$ be a monomial prime ideal in $S$, and $I\subseteq S$ any monomial ideal. We denote by $I(P)$ the monomial ideal in the polynomial ring $S(P) = \mathbb{K}[x_j: x_j\notin P]$, which is obtained from $I$ by applying the $\mathbb{K}$-algebra homomorphism $S\rightarrow S(P)$ with $x_i\mapsto 1$ for all $i\in P$. The ideal $I(P)$ is called the {\it monomial localization} of $I$ at the prime ideal $P$.

\begin{lem} \label{loc}
Let $I$ be a monomial ideal of $S$ and $P\subset S$ be a monomial prime. Then ${\rm sv}(I(P))\leq {\rm sv}(I)$.
\end{lem}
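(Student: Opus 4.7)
The plan is to push a Schmitt--Vogel certificate for $I$ forward through the monomial localization homomorphism $\varphi\colon S\to S(P)$ sending $x_i\mapsto 1$ for $x_i\in P$ and fixing the other variables. Fix $r={\rm sv}(I)$ and subsets $P_1,\ldots,P_r\subseteq{\rm Mon}(I)$ realizing this value, and set $P_\ell':=\{\varphi(u):u\in P_\ell\}$. I claim that $P_1',\ldots,P_r'$ is a Schmitt--Vogel family for $I(P)$, which immediately gives ${\rm sv}(I(P))\leq r$.

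The three conditions of Definition \ref{sv} transfer almost mechanically. Condition (i) is automatic since the singleton $P_1$ maps to a singleton. Condition (ii) follows because $\varphi$ preserves divisibility: given distinct $\bar u,\bar u''\in P_\ell'$, lift them to (necessarily distinct) $u,u''\in P_\ell$, invoke the original condition (ii) to obtain some $u'\in P_{\ell'}$ with $\ell'<\ell$ and $u'\mid uu''$, and apply $\varphi$ to get $\varphi(u')\mid\bar u\bar u''$ with $\varphi(u')\in P_{\ell'}'$. Condition (iii) follows analogously by applying $\varphi$ to a divisibility relation $g=hu'$ expressing each minimal generator $g$ of $I$ in terms of some $u'\in\bigcup_\ell P_\ell$, which shows that every generator of $I(P)$ is divisible by some element of $\bigcup_\ell P_\ell'$.

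The only genuine subtlety is that a monomial $u$ appearing in some $P_\ell$ might be supported entirely in $P$, so that $\varphi(u)=1$. But then $1\in I(P)$, i.e.\ $I(P)=S(P)$, and the singleton family $\{1\}$ already witnesses ${\rm sv}(I(P))=1\leq r$. Outside this degenerate case every element of every $P_\ell'$ is a nontrivial monomial in $I(P)$, so the previous verification applies verbatim. I do not foresee any other obstacle, since the combinatorics of Definition \ref{sv} rests only on divisibility, which monomial localization preserves.
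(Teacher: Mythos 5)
Your proof is correct and takes essentially the same approach as the paper's: the paper simply asserts that applying the $\mathbb{K}$-algebra homomorphism $S\to S(P)$ to each set $P_j$ yields a Schmitt--Vogel family for $I(P)$, and you supply the routine verification of conditions (i)--(iii), together with the (harmless) degenerate case where some $\varphi(u)=1$ forces $I(P)=S(P)$.
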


\begin{proof}
Assume that ${\rm sv}(I)=r$ and let $P_1, P_2, \ldots, P_r$ be the subsets of ${\rm Mon}(I)$ which satisfy the conditions of Definition \ref{sv}. To prove the assertion, it is enough to apply the $\mathbb{K}$-algebra homomorphism $S\rightarrow S(P)$ with $x_i\mapsto 1$ for all $i\in P$, to every set $P_j$ with $1\leq j \leq r$.
\end{proof}

Assume that $I$ is a squarefree monomial ideal and $P=(x_i)$ a principal monomial prime ideal of $S$. Then it is clear that $I(P)=(I:x_i)$. Therefore as a consequence of lemma \ref{loc} we obtain the following corollary.

\begin{cor} \label{swcolon}
Let $I$ be a squarefree monomial ideal. Then for every $1\leq i \leq n$, we have ${\rm sv}((I:x_i))\leq {\rm sv}(I)$.
\end{cor}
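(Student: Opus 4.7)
The plan is to derive this immediately from Lemma~\ref{loc} by specializing to the principal monomial prime $P=(x_i)$. The paragraph just before the statement of the corollary already records the key identification: when $I$ is squarefree, the monomial localization $I(P)$ and the colon ideal $(I:x_i)$ coincide. I would briefly reaffirm this by noting that every minimal monomial generator $u$ of $I$ gets sent by the $\mathbb{K}$-algebra map $S\to S(P)$, $x_i\mapsto 1$, to $u/x_i$ if $x_i\mid u$ and to $u$ itself otherwise; in either case the image is a squarefree monomial not involving $x_i$, and it is precisely a minimal monomial generator of $(I:x_i)$.

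Next I would point out that the Schmitt--Vogel number, as formulated in Definition~\ref{sv}, is intrinsic to the set of monomials of the ideal and does not depend on the ambient polynomial ring. Indeed, both the divisibility condition $uu''\in (u')$ in (ii) and the generation condition (iii) refer only to the multiplicative relations among monomials. Hence ${\rm sv}((I:x_i))={\rm sv}(I(P))$, regardless of whether one views $(I:x_i)$ inside $S$ or inside $S(P)$.

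Combining these two observations with Lemma~\ref{loc} applied to $P=(x_i)$ yields the chain
\[
{\rm sv}((I:x_i))={\rm sv}(I(P))\leq {\rm sv}(I),
\]
which is the desired inequality. Since every step is essentially a routine verification, I do not anticipate any substantive obstacle; the only point worth making explicit is the identification $I(P)=(I:x_i)$ in the squarefree case, which is straightforward from the description of the localization homomorphism above.
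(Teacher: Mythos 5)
Your proposal is correct and matches the paper's argument exactly: the paper also derives the corollary directly from Lemma~\ref{loc} via the observation that $I(P)=(I:x_i)$ when $P=(x_i)$ and $I$ is squarefree. The extra justification you give for this identification is sound but not needed beyond what the paper already records.
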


In the following lemma we consider the behavior of the Schmitt--Vogel number of an arbitrary monomial ideal under the elimination of variables.

\begin{lem} \label{del}
Let $I$ be a monomial ideal of $S=\mathbb{K}[x_1,\ldots,x_n]$. Then there exists a variable $x_i$ such that ${\rm sv}(I\cap S')+1\leq {\rm sv}(I)$, where $S'=\mathbb{K}[x_1, \ldots, x_{i-1}, x_{i+1}, \ldots x_n]$ is the polynomial ring obtained from $S$ by deleting the variable $x_i$.
\end{lem}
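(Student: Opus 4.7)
The plan is to start with a Schmitt--Vogel decomposition $P_1, P_2, \ldots, P_r$ of $I$ achieving $r = {\rm sv}(I)$ and, by deleting from each $P_\ell$ the monomials divisible by a carefully chosen variable $x_i$, produce a valid Schmitt--Vogel decomposition of $I \cap S'$ with at most $r-1$ non-empty sets. By condition (i) I may write $P_1 = \{u_1\}$, and I would choose $x_i$ to be any variable dividing $u_1$ (which exists as long as $I$ is a proper ideal). For each $\ell$, set $P'_\ell = \{u \in P_\ell : x_i \nmid u\} \subseteq {\rm Mon}(I \cap S')$, so that $P'_1 = \emptyset$; after discarding the empty ones, I re-index the survivors as $Q_1, \ldots, Q_s$ with $s \leq r-1$.

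Next I would verify that the $Q_j$'s satisfy conditions (ii) and (iii) of Definition \ref{sv} as a decomposition of $I \cap S'$. Condition (iii) follows from the fact that the minimal generating set $G(I)$ is contained in $\bigcup_\ell P_\ell$ (because any element of $G(I)$ is divisible by some element of $\bigcup_\ell P_\ell$, which is itself a multiple of some element of $G(I)$), together with the observation that $I \cap S'$ is generated by $\{g \in G(I) : x_i \nmid g\}$. Condition (ii) transfers almost verbatim: if $u, u'' \in P'_\ell$ are distinct, the original property supplies $\ell' < \ell$ and $u' \in P_{\ell'}$ with $u' \mid uu''$, and since $x_i \nmid uu''$ we must have $x_i \nmid u'$, placing $u' \in P'_{\ell'}$; in the re-indexed sequence this corresponds to some $Q_{j'}$ with $j' < j$.

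The main step, and the one I expect to carry the whole argument, is condition (i): I need to show that the first non-empty $P'_{\ell_1}$ is a singleton. Since $P'_1 = \emptyset$, we have $\ell_1 \geq 2$. Suppose for contradiction that $P'_{\ell_1}$ contains two distinct monomials $u, u''$; the original condition (ii) applied to $P_{\ell_1}$ furnishes some $\ell' < \ell_1$ and $u' \in P_{\ell'}$ with $u' \mid uu''$, and the divisibility argument of the previous paragraph places $u'$ in $P'_{\ell'}$, contradicting the minimality of $\ell_1$. Hence $|Q_1| = |P'_{\ell_1}| = 1$, the $Q_j$'s form a legitimate Schmitt--Vogel decomposition of $I \cap S'$, and $s \leq r-1$ yields ${\rm sv}(I \cap S') + 1 \leq r = {\rm sv}(I)$. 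The only subtle ingredient here is the observation that the very same mechanism which forces pairs in $P'_{\ell_1}$ to be controlled by earlier sets simultaneously rules out the existence of such pairs, so that condition (i) comes for free once (ii) and (iii) are in place.
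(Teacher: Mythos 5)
Your proof is correct and follows essentially the same route as the paper: start from an optimal Schmitt--Vogel decomposition $P_1,\dots,P_r$, pick $x_i$ dividing the unique element of $P_1$, intersect each $P_\ell$ with $S'$, and check conditions (i)--(iii) for the surviving sets, noting that $P_1' = \emptyset$ drops the count to at most $r-1$. The key observation — that the divisibility in condition (ii) forces the controlling element $u'$ to also avoid $x_i$, which both transfers (ii) and, applied to the first surviving index, proves (i) by contradiction — is exactly the paper's argument.
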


\begin{proof}
Assume that ${\rm sv}(I)=r$ and let $P_1, P_2, \ldots, P_r$ be the subsets of ${\rm Mon}(I)$ which satisfy the conditions of Definition \ref{sv}. Assume that $P_1=\{u\}$ and suppose that $x_i$ is a variable which divides $u$. Set $S'=\mathbb{K}[x_1, \ldots, x_{i-1}, x_{i+1}, \ldots x_n]$ and $P_j'=P_j\cap S'$ for every $1\leq j\leq r$. Then $P_1'=\emptyset$. Thus, there exist integers $2\leq i_1 < i_2 < \ldots < i_t\leq r$ such that $P_{i_k}'\neq \emptyset$ for every $1\leq k \leq t$ and $P_j'=\emptyset$ for every $j\notin \{i_1, i_2 \ldots, i_t\}$. It is clear that $\bigcup_{k=1}^tP_{i_k}'$ is a generating set for $I'$. Since $i_1\geq 2$, it follows that $t \leq r-1$. Hence, in order to prove the assertion, it is enough to prove that the sets $P_{i_1}', \ldots, P_{i_t}'$ satisfy conditions (i) and (ii) of Definition \ref{sv}.

We first verify condition (i). Assume that $\mid P_{i_1}'\mid\geq 2$. This means that there exist two monomials $u_1\neq u_2$ in $P_{i_1}$ which are not divisible by $x_i$. Thus, by condition (ii) of Definition \ref{sv}, there exists and integer $m < i_1$ and a monomial $u_3\in P_m$ with $u_3\mid u_1u_2$. But this is not possible. Because $P_m'=\emptyset$ and therefore, every element of $P_m$ and in particular $u_3$ is divisible by $x_i$. This proves condition (i).

To prove condition (ii), let $v_1\neq v_2$ be two monomials in $P_{i_k}'$ for some $k$ with $1 < k \leq t$. Then $v_1, v_2\in P_{i_k}$ and since $P_1, P_2, \ldots, P_r$ satisfy condition (ii) of Definition \ref{sv}, it follows that there exists and integer $s$ with $1\leq s < i_k$ and a monomial $v_3\in P_s$, such that $v_3$ divides $v_1v_2$. Since $v_1$ and $v_2$ are not divisible by $x_i$, we conclude that $x_i\nmid v_3$. Thus, $s\in \{i_1, \ldots, i_t\}$ and $v_3\in P_s'$. This verifies condition (ii) of Definition \ref{sv} and completes the proof of the lemma.
\end{proof}

We are now ready to state and prove the main result of this section.

\begin{thm} \label{smain}
Let $I$ be a squarefree monomial ideal of $S=\mathbb{K}[x_1,\ldots,x_n]$. Then ${\rm sdepth}(I)\geq n-{\rm sv}(I)+1$ and ${\rm sdepth}(S/I)\geq n-{\rm sv}(I)$.
\end{thm}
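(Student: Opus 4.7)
The plan is to prove both inequalities simultaneously by induction on the number $n$ of variables, with Lemma \ref{del} supplying the strict decrease in ${\rm sv}$ after eliminating a suitable variable and Corollary \ref{swcolon} controlling the behavior of ${\rm sv}$ under the corresponding colon operation. The base case $n=1$ together with the degenerate cases $I=(0)$ and $I=S$ (in the latter, take $P_1=\{1\}$, giving ${\rm sv}(I)=1$) can be handled by direct inspection, so from now on assume $0\neq I\neq S$.

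For the inductive step, invoke Lemma \ref{del} to choose a variable $x_i$ with ${\rm sv}(I\cap S')\leq {\rm sv}(I)-1$, where $S'=\mathbb{K}[x_1,\ldots,x_{i-1},x_{i+1},\ldots,x_n]$. Set $I'=I\cap S'$ and $I''=(I:x_i)$. Since $I$ is squarefree, no minimal generator of $I''$ involves $x_i$, so $I''$ is extended from the ideal $I''_0:=I''\cap S'$ of $S'$; in particular the standard identities ${\rm sdepth}_S(I'')={\rm sdepth}_{S'}(I''_0)+1$ and ${\rm sdepth}_S(S/I'')={\rm sdepth}_{S'}(S'/I''_0)+1$ hold. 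Exploiting the monomial decompositions $I=I'\oplus x_iI''$ and $S/I=(S'/I')\oplus x_i(S/I'')$ together with the same Stanley depth splitting principle used in the proof of Theorem \ref{main} (compare \cite[Lemma 3.6]{hvz}), one obtains
\[
{\rm sdepth}_S(I)\geq \min\{{\rm sdepth}_{S'}(I'),\,{\rm sdepth}_S(I'')\}
\]
and
\[
{\rm sdepth}_S(S/I)\geq \min\{{\rm sdepth}_{S'}(S'/I'),\,{\rm sdepth}_S(S/I'')\}.
\]

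To finish, apply the induction hypothesis in $S'$ (which has $n-1$ variables) to the squarefree ideals $I'$ and $I''_0$. Using ${\rm sv}(I')\leq {\rm sv}(I)-1$ from Lemma \ref{del} and ${\rm sv}(I''_0)={\rm sv}(I'')\leq {\rm sv}(I)$ from Corollary \ref{swcolon}, the hypothesis yields
\[
{\rm sdepth}_{S'}(I')\geq n-{\rm sv}(I)+1,\qquad {\rm sdepth}_{S'}(S'/I')\geq n-{\rm sv}(I),
\]
\[
{\rm sdepth}_{S'}(I''_0)\geq n-{\rm sv}(I),\qquad {\rm sdepth}_{S'}(S'/I''_0)\geq n-{\rm sv}(I)-1.
\]
Adding the extra $+1$ that comes from the extension to $S$, each of the four quantities entering the two minima meets the required lower bound, which gives the claimed inequalities. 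The only point I expect to require real care is the passage from the vector-space decomposition to the two sdepth inequalities above; this rests on the same splitting tool already used in Section \ref{sec2}, and once it is in hand the remainder of the argument is a book-keeping exercise driven by the two bounds on ${\rm sv}$ obtained from Lemma \ref{del} and Corollary \ref{swcolon}.
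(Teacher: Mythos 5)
Your proposal is correct and follows essentially the same route as the paper: induction on $n$, choose $x_i$ via Lemma \ref{del} so that $\mathrm{sv}(I\cap S')\leq\mathrm{sv}(I)-1$, split $I$ and $S/I$ along the direct sum decompositions in $x_i$, bound the colon ideal with Corollary \ref{swcolon}, and finish by arithmetic on the inductive bounds. One small imprecision in the write-up: the phrase ``adding the extra $+1$ that comes from the extension to $S$'' applies only to the two $I''$-quantities (the $I'$-quantities are already compared inside $S'$ and need no adjustment), but the intended bookkeeping is clear and agrees with the paper's.
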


\begin{proof}
There is nothing to prove if $I=0$. Thus assume that $I\neq 0$. We prove the assertions by induction on $n$. If
$n=1$, then $I$ is a principal ideal and so we have ${\rm sv}(I)=1$, ${\rm
sdepth}(I)=n$ and by \cite[Theorem
1.1]{r}, ${\rm sdepth}(S/I)=n-1$. Therefore, in
this case, the assertions are trivial.

We now assume that $n\geq 2$. By Lemma \ref{del} there exists a variable $x_i$ such that ${\rm sv}(I\cap S')+1\leq {\rm sv}(I)$, where $S'=\mathbb{K}[x_1, \ldots, x_{i-1}, x_{i+1}, \ldots x_n]$ is the polynomial ring obtained from $S$ by deleting the variable $x_i$. Set $I''=(I:x_i)$.

Now $I=I'S'\oplus x_iI''S$ and $S/I=(S'/I'S')\oplus
x_i(S/I''S)$ and therefore by the definition of Stanley depth we have
\[
\begin{array}{rl}
{\rm sdepth}(I)\geq \min \{{\rm sdepth}_{S'}(I'S'), {\rm sdepth}_S(I'')\},
\end{array} \tag{1} \label{1}
\]
and
\[
\begin{array}{rl}
{\rm sdepth}(S/I)\geq \min \{{\rm sdepth}_{S'}(S'/I'S'), {\rm sdepth}_S(S/I'')\}.
\end{array} \tag{2} \label{2}
\]
Note that the generators of $I''$ belong to $S'$. Therefore our induction hypothesis implies that $${\rm sdepth}_{S'}(S'/I'')\geq (n-1)-{\rm sv}(I'')$$ and  $${\rm sdepth}_{S'}(I'')\geq (n-1)-{\rm sv}(I'')+1$$ Using Corollary \ref{swcolon} together with \cite[Theorem
1.1]{r} and \cite[Lemma 3.6]{hvz}, we conclude that
$${\rm sdepth}(S/I'')={\rm sdepth}_{S'}(S'/I'')+1\geq (n-1)-{\rm sv}(I'')+1\geq n-{\rm sv}(I),$$
and
$${\rm sdepth}_S(I'')={\rm sdepth}_{S'}(I'')+1\geq (n-1)-{\rm sv}(I'')+1+1\geq n-{\rm sv}(I)+1.$$

On the other hand, by the choice of $x_i$ we have ${\rm sv}(I'S')\leq {\rm sv}(I)-1$ and therefore by
 induction hypothesis we conclude that

$${\rm sdepth}_{S'}(I'S')\geq (n-1)-{\rm sv}(I'S')+1\geq (n-1)-({\rm sv}(I)-1)+1$$ $$=n-{\rm sv}(I)+1,$$
and similarly ${\rm sdepth}_{S'}(S'/I'S')\geq n-{\rm sv}(I)$. Now the assertions follow by inequalities (\ref{1}) and (\ref{2}).
\end{proof}

In \cite{ks}, the authors determine two lower bounds for the Stanley depth of monomial ideals (see \cite[Corollary 2.5 and Theorem 3.2]{ks}). In the following examples, we show that these bounds are not stronger than the bound given in Theorem \ref{smain}.

\begin{exmps}
\begin{enumerate}
\item Consider the ideal $I = (xy, xz, yzt) \subset S = \mathbb{K}[x,y,z,t]$.
	It is easy to see that the lcm number of $I$ (see \cite[Definition 1.1]{ks}) is equal to $3$. Thus, Corollary \cite[Corollary 2.5]{ks} gives the bound ${\rm sdepth}(S/I) \geq 4 - 3 = 1$ and ${\rm sdepth}(I) \geq 4 - 3+1 = 2$. On the other hand, one can easily see that ${\rm sv}(I)=2$. Thus, Theorem \ref{smain} implies that ${\rm sdepth}(S/I) \geq 2$ and ${\rm sdepth}(I) \geq 3$. We note that in \cite[Thorem 3.3]{s2}, the author determines a lower bound for the Stanley depth of squarefree monomial ideals. But this bound is strengthened by \cite[Corollary 2.5]{ks} (see also \cite[Corollary 2.6]{ks}).

\item Let $I \subset S = \mathbb{K}[x_1, \dotsc, x_5]$ be the ideal generated by all squarefree monomials of degree $3$.
	As mentioned in \cite[Examples 3.4]{ks}, the order dimension of $I$ (see \cite[Definitions 1.5]{ks}) is equal to $4$. Thus \cite[Theorem 3.2]{ks} gives the bounds ${\rm sdepth}(S/I) \geq 5 - 4 = 1$ and ${\rm sdepth}(I) \geq 5-4+1=2$. But ${\rm sv}(I)=3$. Indeed, one can consider the following subsets of ${\rm Mon}(I)$: $P_1=\{x_1x_2x_3\}, P_2=\{x_1x_2x_4, x_1x_3x_4, x_2x_3x_4\}$ and $P_3=\{x_1x_2x_5, x_1x_3x_5, x_1x_4x_5, x_2x_3x_5, x_2x_4x_5, x_3x_4x_5\}$. Thus Theorem \ref{smain} implies that ${\rm sdepth}(S/I) \geq 2$ and ${\rm sdepth}(I) \geq 3$.
\end{enumerate}
\end{exmps}

%%%%%%%%%%%%%%%%%%%%%%%%%%%%%%%%%%%%%%%%%%%%%%%%%%%%%%%%%%%%%%%%%%%%%%%%%%

%%%%%%%%%%%%%%%%%%%%%%%%%%%%%%%%%%%%%%%%%%%%%%%%%%%%%%%%%%%%%%%%%%%%%%%%%%

\end{document}